\newtheorem{theorem}{{Theorem}}
\begin{document}
%
% paper title
% Titles are generally capitalized except for words such as a, an, and, as,
% at, but, by, for, in, nor, of, on, or, the, to and up, which are usually
% not capitalized unless they are the first or last word of the title.
% Linebreaks \\ can be used within to get better formatting as desired.
% Do not put math or special symbols in the title.
\title{A fast two-stage algorithm for non-negative matrix factorization in streaming data}
%
%
% author names and IEEE memberships
% note positions of commas and nonbreaking spaces ( ~ ) LaTeX will not break
% a structure at a ~ so this keeps an author's name from being broken across
% two lines.
% use \thanks{} to gain access to the first footnote area
% a separate \thanks must be used for each paragraph as LaTeX2e's \thanks
% was not built to handle multiple paragraphs
%

\author{Ran~Gu,~\IEEEmembership{}
        Qiang~Du,~\IEEEmembership{}
        and~Simon~J.~L.~Billinge~\IEEEmembership{}% <-this % stops a space
\thanks{Ran Gu is with the School of Statistics and Data Science, Nankai University, Tianjin 300071, China.}
\thanks{Qiang Du, and Simon J. L. Billinge are with Department of Applied Physics and Applied Mathematics, Fu Foundation School of Engineering and Applied Sciences, Columbia University, New York, NY 10027, USA.
}% <-this % stops a space
\thanks{Qiang Du is also with the Data Science Institute, Columbia University, New York, NY 10027, USA.
}% <-this % stops a space
\thanks{Simon J. L. Billinge is also with the Condensed Matter Physics and Materials Science Department, Brookhaven National
Laboratory, Upton, NY 11973, USA.}% <-this % stops a space
\thanks{This research was supported by NSF-DMR 1922234 (for R.G., Q.D. and S.B.) and NSF CCF 1740833 (for Q.D.).}
\thanks{Manuscript received January 2th, 2021; revised }}

% note the % following the last \IEEEmembership and also \thanks -
% these prevent an unwanted space from occurring between the last author name
% and the end of the author line. i.e., if you had this:
%
% \author{....lastname \thanks{...} \thanks{...} }
%                     ^------------^------------^----Do not want these spaces!
%
% a space would be appended to the last name and could cause every name on that
% line to be shifted left slightly. This is one of those "LaTeX things". For
% instance, "\textbf{A} \textbf{B}" will typeset as "A B" not "AB". To get
% "AB" then you have to do: "\textbf{A}\textbf{B}"
% \thanks is no different in this regard, so shield the last } of each \thanks
% that ends a line with a % and do not let a space in before the next \thanks.
% Spaces after \IEEEmembership other than the last one are OK (and needed) as
% you are supposed to have spaces between the names. For what it is worth,
% this is a minor point as most people would not even notice if the said evil
% space somehow managed to creep in.

% The paper headers
\markboth{IEEE TRANSACTIONS ON SIGNAL PROCESSING}%
{Gu \MakeLowercase{\textit{et al.}}: a fast two-stage algorithm for non-negative matrix factorization in streaming data}
% The only time the second header will appear is for the odd numbered pages
% after the title page when using the twoside option.
%
% *** Note that you probably will NOT want to include the author's ***
% *** name in the headers of peer review papers.                   ***
% You can use \ifCLASSOPTIONpeerreview for conditional compilation here if
% you desire.

% If you want to put a publisher's ID mark on the page you can do it like
% this:
%\IEEEpubid{0000--0000/00\$00.00~\copyright~2015 IEEE}
% Remember, if you use this you must call \IEEEpubidadjcol in the second
% column for its text to clear the IEEEpubid mark.

% use for special paper notices
%\IEEEspecialpapernotice{(Invited Paper)}

% make the title area
\maketitle

% As a general rule, do not put math, special symbols or citations
% in the abstract or keywords.
\begin{abstract}
In this article, we study algorithms for nonnegative matrix factorization (NMF) in various applications involving streaming data.
Utilizing the continual nature of the data, we develop a fast two-stage algorithm for highly efficient and accurate NMF. In the first stage, an alternating non-negative least squares (ANLS) framework is used, in combination with active set method with warm-start strategy for the solution of subproblems. In the second stage, an interior point method  is adopted to accelerate the local convergence. The convergence of the proposed algorithm is proved. The new algorithm is compared with some existing algorithms in benchmark tests using both real-world data and synthetic data. The results demonstrate the advantage of our algorithm in finding high-precision solutions.
\end{abstract}

% Note that keywords are not normally used for peerreview papers.
\begin{IEEEkeywords}
Interior point method, active set method, nonnegative matrix factorization, streaming data.
\end{IEEEkeywords}

% For peer review papers, you can put extra information on the cover
% page as needed:
\ifCLASSOPTIONpeerreview
\begin{center} \bfseries EDICS Category: OPT-NCVX \end{center}
\fi
%
% For peerreview papers, this IEEEtran command inserts a page break and
% creates the second title. It will be ignored for other modes.
\IEEEpeerreviewmaketitle

\section{Introduction}
% The very first letter is a 2 line initial drop letter followed
% by the rest of the first word in caps.
%
% form to use if the first word consists of a single letter:
% \IEEEPARstart{A}{demo} file is ....
%
% form to use if you need the single drop letter followed by
% normal text (unknown if ever used by the IEEE):
% \IEEEPARstart{A}{}demo file is ....
%
% Some journals put the first two words in caps:
% \IEEEPARstart{T}{his demo} file is ....
%
% Here we have the typical use of a "T" for an initial drop letter
% and "HIS" in caps to complete the first word.
\IEEEPARstart{N}{onnegative} matrix factorization (NMF) \cite{paatero1994positive,lee1999learning} {refers to the factorization of} a matrix approximately into the product of two nonnegative matrices with low rank, $M\approx XY$. It has become one of the most popular multi-dimensional data processing tools in various applications such as signal processing\cite{buciu2008non}, biomedical engineering\cite{sra2006nonnegative}, pattern recognition\cite{cichocki2009nonnegative}, computer vision and image engineering\cite{buciu2008non}.

Lee and Sewing initiated the study of NMF and presented a method in 1999 \cite{lee1999learning}. Their method
makes all decomposed components non-negative and achieves non-linear dimension reduction at the same time. Developed in \cite{lee2001algorithms} for NMF, Lee and Seung's multiplicative update rule has been a popular method due to the simplicity in implementation.

A commonly used optimization formulation of $M\approx XY$ is to use the Square of Euclidian Distance (SED) as the objective function, that is,
\begin{equation}\label{prob}
\begin{array}{cl}
\min\limits_{X\in \mathbb{R}^{n\times k},Y\in \mathbb{R}^{k\times m}} & \frac{1}{2} \|XY-M\|_F^2\\
                 s.t. & X\geq 0\\
                 & Y\geq 0\\
\end{array}
\end{equation}
Many studies of NMF based on the above formulation have focused  on the use of different optimization approaches like the alternating non-negative least squares (ANLS) \cite{lin2007projected,kim2008nonnegative,guan2012nenmf,huang2015quadratic}, coordinate descent methods \cite{cichocki2009fast,li2009fastnmf}, and alternating direction method of multipliers (ADMM) \cite{hajinezhad2016nonnegative}. A comprehensive survey on various NMF models and many existing NMF algorithms can be found in \cite{wang2012nonnegative}.

The NMF problem has been shown to be non-convex and NP-hard \cite{vavasis2009complexity}.
The algorithms studied in the literature can only guarantee finding a local minimum in general, rather than a global minimum of the cost function.
Although Arora et al. \cite{arora2016computing} presented a polynomial-time algorithm for constant $k$, the order of the complexity of the algorithm is too high to be applied in practice.  Nevertheless, in many data mining applications, high quality local minima are desired in short time \cite{wang2012nonnegative}.

In this study, we are motivated by the application of NMF to problems involving streaming data. Here, streaming data is continually generated data with continuous distributions, such as those from the real-time monitoring of reaction product in chemistry experiments and materials synthesis \cite{liu;jac21, todd;ic20}. For example, Zhao et al. \cite{zhao2011determining} measured the X-ray diffraction data during the nucleation and growth of zeolite-supported Ag nanoparticles through reduction of Ag-exchanged mordenite (MOR), and the processed the data with pair distribution function (PDF) measurement \cite{egami;b;utbp12}. In the data, each PDF is approximately represented by
a vector representation of $n$ dimensions, which is recorded at $m$ time instances in total. We should note the key features of
continual nature of these types of continuously distributed data: at any fixed time, the PDF is continuous in the distance variable; meanwhile, at any fixed distance in the PDF measurement, its value also has continuity in time; moreover, the spatially distributed data at later times are generated progressively following the data earlier in time. In the AgMOR data used in  \cite{zhao2011determining}, the dimensions were respectively $n=3000$ and $m=36$, %\tcp{what $n$ and $m$ corresponds to physically in this application?}\textcolor{blue}{
where $n$ is the length of each data, and $m$ is the number of measurements. %}
It was anticipated that there are three materials present in the reaction, which means that $k=3$. The focus of our study here is on streaming data in the particular regime of $n\gg m\gg k$, with $k$ being very small. This reflects the high dimensionality of data in an individual measurement (very large $n$) for systems with a relatively small number of  components $k$.

Our goal is to obtain high-precision local solutions for streaming data with a relatively small scale, that $n\leq 5000$, $m\leq 200$, and $k\leq 10$.
Based on our observation, ANLS has a fast descent rate on objective function at the first few iterations, but
a low convergent rate near a local minimizer. Interior point method has a better local convergent rate, but require much more computation. Thus, we propose to combine ANLS with interior point method into a two-stage algorithm. In the first stage,  an active set method is used for ANLS. Due to the continuity of streaming data, the number of active set changes is small. In the second stage,  a line search interior point method is adopted to reach a fast local convergence. Based on the property of streaming data, we optimize the computation of the interior point algorithm so that the computational cost can scale like $O(nm^2k^3)$ in each iteration. The proposed two-stage algorithm combines the advantages of ANLS and interior point method for streaming data.

The organization of this paper is as follows. Section 2 introduces the first stage of our proposed algorithm which is  based on the ANLS with an active set method for streaming data. Section 3 proposes the second stage of the algorithm which is a line search interior point method. Its convergence is also discussed. Section 4 gives the whole framework of our proposed two-stage algorithm to solve NMF in streaming data. Section 5 shows the efficiency of our proposed algorithm by numerical tests. Section 6 concludes this paper with discussions on some future concerns.

%\hfill mds
%
%\hfill September 17, 2020

\section{ANLS framework and active set method}\label{sec:2}

We first briefly review the ANLS framework for solving \eqref{prob}. In ANLS, variables are simply divided into two groups, which are then updated respectively as outlined in \cref{anls}.

\begin{algorithm}
\caption{Alternating nonnegative least squares (ANLS)}\label{anls}
Repeat until stopping criteria are satisfied\\
$$\begin{array}{cl}
\min\limits_{X\in \mathbb R^{n\times k}} & \frac{1}{2} \|XY-M\|_F^2\\
s.t. & X\geq 0.
\end{array}
$$
$$\begin{array}{cl}
\min\limits_{Y\in \mathbb R^{k\times m}} & \frac{1}{2} \|XY-M\|_F^2\\
s.t. & Y\geq 0.
\end{array}
$$
end
\end{algorithm}

Note that each subproblem can be split into a series of nonnegative %constrained
 least square problems
\begin{equation}\label{qp}
\begin{array}{cl}
\min\limits_{x} & \frac{1}{2}\|Cx-d\|_2^2\\
                 s.t. & x\geq 0\\
\end{array}
\end{equation}
For $C=X$, $d$ corresponds to every column of $M$, while for $C=Y^T$, $d$ takes every row of $M$.

Although the original problem in \eqref{prob} is nonconvex, the subproblems in \cref{anls} are convex quadratic problems whose optimal solutions can be found in polynomial time. In addition, the convergence of \cref{anls} to a stationary point of \eqref{prob} has been proved \cite{grippo2000convergence}.

On the basis of ANLS framework, many algorithms for NMF have been developed, such as active set method \cite{kim2008nonnegative}, projection gradient method \cite{lin2007projected}, projection Newton method \cite{gong2012efficient}, projection quasi-Newton method \cite{kim2007fast,zdunek2006non}, Nesterov's gradient method \cite{guan2012nenmf}, and the method combined with Barzilai-Borwein stepsize \cite{huang2015quadratic}.

The active set method solves the subproblem exactly. Kim and park \cite{kim2008nonnegative} introduced an algorithm based on ANLS and active set method. The constrained least square problem in the matrix form with multiple right-hand side vectors is decomposed into several independent nonnegative least square problems with single right-hand side vector \eqref{qp}, which are solved by the active set method of Lawson and Hanson \cite{lawson1995solving}. Later, Kim and Park \cite{kim2011fast} proposed an active-set-like algorithm based on ANLS framework. In their algorithm, the single least squares are solved by block principal pivoting method, and the columns that have a common active set are grouped together to avoid redundant computation of Cholesky factorization in solving linear equations.

By noticing the continuity in streaming data,  when a single least square problem \eqref{qp} is solved, the same $C$ and similar $d$ may be used in the
subsequent least square, resulting in similar solutions with likely the same active set.

Therefore, we choose the active set method of Lawson and Hanson \cite{lawson1995solving} to solve the least square problem \eqref{qp}, using the solution and its active set as the initial guess of the subsequent least square.  This strategy is called warm start strategy or continuation technology, which is widely used in many algorithms and can reasonably improve the effectiveness\cite{liu2016sample,goldfarb2011convergence}. We do not need to do any re-grouping, because the right-hand side vectors have been naturally grouped due to the continuity in data, and we just need to solve them one by one sequentially. In fact, the active set of the solution in the subsequent least square usually has no or little change. When we solve the first set of equations in the new least square subproblem, the Cholesky factorization performed in the previous step can be utilized to avoid redundant computation. Numerically, we see that the active set usually changes only on a very few occasions or does not change at all for the streaming data under consideration. Consequently, the first stage of our algorithm is chosen to be a combination of ANLS framework, active set method and warm start strategy.

\section{A line search interior point method for NMF}

Interior-point methods have proved to be equally successful for various nonlinear optimization as for linear programming. In this section, we propose a line search interior point method for solving NMF. Its global convergence and computational cost are analyzed.

\subsection{Algorithm}

Given that the linear independence constraint qualification (LICQ) holds for NMF problem,  the KKT conditions \cite{kuhn1951} for the problem can be written as
\begin{equation}\label{eqn:kkt}
\begin{array}{c}
(XY-M)Y^T=R\\
X^T(XY-M)=S\\
\langle R,X\rangle=0\\
\langle S,Y\rangle=0\\
X\geq 0,Y\geq 0,R\geq 0,S\geq 0
\end{array}
\end{equation}
We denote $x=vec(X^T)$, $y=vec(Y)$, $r=vec(R^T)$, $s=vec(S)$, where $vec$ transforms a matrix to a vector by expanding it by columns. Meanwhile, we define the inverse operations $mat(x)=X^T$, $mat(y)=Y$, $mat(r)=R^T$, $mat(s)=S$.

Applying Newton's method to the nonlinear system, in the variables $x$, $y$, $r$, $s$, we obtain
\begin{equation}\label{eqn:predictor}
\begin{aligned}
&\left(
  \begin{array}{cccc}
    Q_1 & C^T & -I &  \\
    C & Q_2 &  & -I \\
    Diag(r) &  & Diag(x) &  \\
     & Diag(s) &  & Diag(y) \\
  \end{array}
\right)\left(
         \begin{array}{c}
           \Delta x \\
           \Delta y \\
           \Delta r \\
           \Delta s \\
         \end{array}
       \right)\\
&~~~~~~~~~~~~~~~~~~~~=\left(
                 \begin{array}{c}
                   r-graX \\
                   s-graY \\
                   \mu e-Diag(r)x \\
                   \mu e-Diag(s)y \\
                 \end{array}
               \right)
\end{aligned}
\end{equation}
with $\mu=0$, where $graX=vec(Y(XY-M)^T)$, $graY=vec(X^T(XY-M))$, $e$ is a vector of ones, $Diag(x)$ constructs a diagonal matrix with the diagonal elements given by $x$,
$$Q_1=\left(
       \begin{array}{ccc}
         YY^T &  &  \\
          & \ddots & \\
         &  & YY^T \\
       \end{array}
     \right)
$$
$$Q_2=\left(
       \begin{array}{ccc}
         X^TX &  &  \\
          & \ddots & \\
         &  & X^TX \\
       \end{array}
     \right)$$
\begin{equation}
\begin{aligned}
&C=\left(
       \begin{array}{ccc}
         X_1^TY_1^T & \cdots & X_n^TY_1^T \\
          \vdots& \ddots & \vdots\\
         X_1^TY_m^T& \cdots & X_n^TY_m^T \\
       \end{array}
     \right)\\
&+\left(
       \begin{array}{ccc}
         (X_1Y_1-M_{11})I & \cdots & (X_nY_1-M_{n1})I \\
          \vdots& \ddots & \vdots\\
         (X_1Y_m-M_{1m})I& \cdots & (X_nY_m-M_{nm})I \\
       \end{array}
     \right)
\end{aligned}
\end{equation}
$X_i$ is the $i$th row of $X$, and $Y_j$ is the $j$th column of $Y$.

Let $\mu$ be strictly positive, then the variables $x$, $y$, $r$ and $s$ are forced to take positive values. The trajectory $(x(\mu), y(\mu), r(\mu), s(\mu))$ is called the primal-dual central path. The variables are updated by
\begin{equation}\label{eqn:update}
\begin{array}{c}
x=x+\alpha_1 \Delta x \\
y=y+\alpha_1 \Delta y \\
r=r+\alpha_2 \Delta r \\
s=s+\alpha_2 \Delta s
\end{array}
\end{equation}
where $\alpha_1\in (0,\alpha_1^{\max}]$, $\alpha_2\in (0,\alpha_2^{\max}]$, and
\begin{equation}\label{eqn:alpha}
\begin{array}{rl}\alpha_1^{\max}=&\max \{\alpha\in (0,1]: x+\alpha \Delta x\geq (1-\tau)x,\\
&y+\alpha \Delta y\geq (1-\tau)y\}\\
\alpha_2^{\max}=&\max \{\alpha\in (0,1]: r+\alpha \Delta r\geq (1-\tau)r,\\
&s+\alpha \Delta s\geq (1-\tau)s\}\end{array}\end{equation}
with $\tau\in (0, 1)$. The condition \eqref{eqn:alpha} is called the fraction to the boundary rule \cite{nocedal2006numerical}, which is used to prevent the variables from approaching their lower bounds of 0 too quickly. In this work, we choose $\tau=0.9$.

A predictor or probing strategy \cite{nocedal2006numerical} can also be used to determine the parameter $\mu$. We calculate a predictor (affine scaling) direction
$$(\Delta x^{aff},\Delta y^{aff},\Delta r^{aff},\Delta s^{aff}) $$
by setting $\mu=0$. We probe this direction by letting $(\alpha_1^{aff},\alpha_2^{aff})$ be the
longest step lengths that can be taken along the affine scaling direction before violating
the nonnegativity conditions $(x,y,r,s)\geq 0$. Explicit formulas for these step lengths are given
by \eqref{eqn:alpha} with $\tau= 1$. We then define $\mu^{aff}$ to be the value of complementarity along the
(shortened) affine scaling step, that is,
\begin{equation}\label{eqn:muaff}\begin{array}{rl}
\mu^{aff}=&[(x+\alpha_1^{aff}\Delta x)^T(r+\alpha_2^{aff}\Delta r)\\
&+(y+\alpha_1^{aff}\Delta y)^T(s+\alpha_2^{aff}\Delta s)]/(nk+mk)
\end{array}\end{equation}
and a heuristic choice of $\mu$ is defined as following:
$\mu=\sigma \mu$ and
\begin{equation}\label{eqn:mu}
\sigma=\min\{(\frac{\mu^{aff}}{(x^Tr+y^Ts)/(nk+mk)})^3,0.99\}.
\end{equation}

We propose a two-level nested loop algorithm for the interior point search. In the inner loop, the parameter $\mu$ is fixed. In the outer loop, we gradually reduce $\mu$ to 0.
We use the following error function to break the inner loop, which is based on the perturbed KKT system:
\begin{equation}\label{error}\begin{array}{rl}
E(x,y,r,s;\mu)=&\max\{ \|((graX-r)^T,(graY-s)^T)^T\|,\\
&\|((r^TDiag(x),s^TDiag(Y))-\mu e^T)^T\| \}
\end{array}
\end{equation}

%A key feature of practical algorithms is their use of corrector steps that compensate for
%the linearization error made by the Newton (affine-scaling) step in modeling the equations
%$Diag(r)x=0$ and $Diag(s)y=0$. By extending Mehrotra's predictor corrector method for linear programming, the corrector step is calculated by solving
%\begin{equation}\label{eqn:corrector}
%\left(
%  \begin{array}{cccc}
%    Q_1 & C^T & -I &  \\
%    C & Q_2 &  & -I \\
%    Diag(r) &  & Diag(x) &  \\
%     & Diag(s) &  & Diag(y) \\
%  \end{array}
%\right)\left(
%         \begin{array}{c}
%           \Delta x \\
%           \Delta y \\
%           \Delta r \\
%           \Delta s \\
%         \end{array}
%       \right)=\left(
%                 \begin{array}{c}
%                   r-graX \\
%                   s-graY \\
%                   \mu e-Diag(r)x- Diag(\Delta r^{aff})\Delta x^{aff}\\
%                   \mu e-Diag(s)y - Diag(\Delta s^{aff})\Delta y^{aff}\\
%                 \end{array}
%               \right)
%\end{equation}
%The coefficient matrix in both linear systems \cref{eqn:predictor} and
%\cref{eqn:corrector} is the same. Thus, the factorization of the matrix needs to be computed only once,
%and the marginal cost of solving the second system is relatively small.

To guarantee the global convergence of the algorithm, we apply a line search approach. First we consider the second derivation of the interior-point method associated with the barrier problem
$$
\min\limits_{X,Y} \frac{1}{2} \|XY-M\|_F^2-\mu \sum \log (X_{ik})-\mu \sum \log (Y_{kj})
$$
We use the exact merit function as same as the barrier function, which can be formed by
$$\begin{array}{rl}
\phi(x,y)=&\frac{1}{2} \|mat(x)^Tmat(y)-M\|_F^2\\
&-\mu \sum \log (x_{i})-\mu \sum \log (y_{j})
\end{array}$$
In the algorithm, we utilize Armijo line search to make the merit function sufficiently decrease.

Considering the convexity of the Hessian, we approximate $$\left(
                                                           \begin{array}{cc}
                                                             Q_1 & C^T \\
                                                             C & Q_2 \\
                                                           \end{array}
                                                         \right) $$
by a positive definite matrix to guarantee the direction $(\Delta x,\Delta y)$ is always a descent direction of $\phi(x,y)$, so that such a line search can be implemented. Notice that the original NMF problem is a least square problem. Thus we can utilize the Hessian in the traditional Guass-Newton algorithm. The Hessian matrix is
$$\left(
                                                           \begin{array}{cc}
                                                             Q_1 & \bar C^T \\
                                                             \bar C & Q_2 \\
                                                           \end{array}
                                                         \right) $$
and it is guaranteed to be positive semi-definite. Here
$$\bar C=\left(
       \begin{array}{ccc}
         X_1^TY_1^T & \cdots & X_n^TY_1^T \\
          \vdots& \ddots & \vdots\\
         X_1^TY_m^T& \cdots & X_n^TY_m^T \\
       \end{array}
     \right)$$
which is the first item of $C$. For further safeguarding, we add a diagonal matrix $\rho I$ to this Hessian, where $\rho$ is a small positive constant. Then we obtain the primal-dual direction by solving
     \begin{equation}\label{eqn:equation}
     \begin{aligned}
&\left(
  \begin{array}{cccc}
    Q_1+\rho I & \bar C^T & -I &  \\
    \bar C & Q_2+\rho I &  & -I \\
    Diag(r) &  & Diag(x) &  \\
     & Diag(s) &  & Diag(y) \\
  \end{array}
\right)\left(
         \begin{array}{c}
           \Delta x \\
           \Delta y \\
           \Delta r \\
           \Delta s \\
         \end{array}
       \right)\\
&~~~~~~~~~~~~~~~~~~~~=\left(
                 \begin{array}{c}
                   r-graX \\
                   s-graY \\
                   \mu e-Diag(r)x \\
                   \mu e-Diag(s)y \\
                 \end{array}
               \right).
\end{aligned}
\end{equation}

By eliminating $\Delta r$ and $\Delta s$ in \eqref{eqn:equation}, we have
\begin{equation}\label{eqn:p}
\left(
  \begin{array}{cc}
    R_1 & \bar C^T \\
    \bar C & R_2 \\
  \end{array}
\right)\left(
  \begin{array}{c}
    \Delta x \\
    \Delta y \\
  \end{array}
\right)=-\nabla \phi(x,y),
\end{equation}
where
$$\begin{array}{c}
R_1=Q_1+\rho I+Diag(x)^{-1}Diag(r),\\
R_2=Q_2+\rho I+Diag(y)^{-1}Diag(s),\\
\nabla \phi(x,y)=(graX^T-\mu(x^{-1})^T,graY^T-\mu(y^{-1})^T)^T,
\end{array}$$
and $Diag(x)^{-1}e$ is simply represented by $x^{-1}$.
We simplify the above formula by
$$
Bp=-\nabla \phi(x,y),
$$
where $B$ represents the coefficient matrix and $p$ represents the direction $(\Delta x^T,\Delta y^T)^T$. Since $B$ is positive definite, the inner product $p^T\nabla \phi(x,y)>0$, which means that $p$ is a descent direction.

To sum up all the approaches above, we present the whole algorithm of interior point method in \eqref{alg:int}. It contains two loops. The parameter $\mu$ is fixed in the inner loop. Due to the Armijo line search \eqref{eqn:armijo} in the inner loop, the stopping criterion of the inner loop can be satisfied in finite iterations. Further, the parameter $\mu$ and $\epsilon_\mu$ are reduced gradually, and by the definition of error function $E$, the solutions of the two-loop algorithm satisfy the KKT system \eqref{eqn:kkt} within the error $\epsilon_{TOL}$. In practice, the barrier stop tolerance can be defined as
$$\epsilon_\mu=\mu.$$
 The complete convergence theorem and its proof are given in \eqref{thm:converge}.

\begin{algorithm}
\caption{A line search interior point method}\label{alg:int}
Initialize: Choose $x_0,y_0,r_0,s_0>0$,  Select an initial barrier parameter $\mu > 0$, parameters $\eta, \sigma\in(0, 1)$, and decreasing tolerances $\epsilon_{\mu}\downarrow 0$ and $\epsilon_{TOL}$. Set $k= 0$.\\
Repeat until $E(x_k,y_k,r_k,s_k;0) \leq \epsilon_{TOL}$\\
\text{\quad} Repeat until $E(x_k,y_k,r_k,s_k;\mu) \leq \epsilon_{\mu}$\\
\text{\quad\quad} Compute the primal-dual direction by solving \eqref{eqn:equation}.\\
\text{\quad\quad} Compute $\alpha_1^{\max}$ and $\alpha_2^{\max}$ using \eqref{eqn:alpha}.\\
\text{\quad\quad} Backtrack step lengths $\alpha_1=\frac{1}{2^t}\alpha_1^{\max}, \alpha_2=\alpha_2^{\max}$\\
\text{\quad\quad} to find the smallest integer $t\geq 0$ satisfying
\begin{equation}\label{eqn:armijo}\begin{array}{l}
\phi (x_k + \alpha_1 \Delta x ,y_k + \alpha_1 \Delta y)\\
 \leq \phi (x_k ,y_k ) + \eta \alpha_1 (\Delta x^T,\Delta y^T)\nabla\phi(x_k,y_k)
 \end{array}
\end{equation}
\text{\quad\quad} Compute $(x^{k+1},y^{k+1},r^{k+1},s^{k+1})$ using \eqref{eqn:update}.\\
\text{\quad\quad} Set $k:=k+1$.\\
\text{\quad} end\\
\text{\quad} Compute parameter $\sigma$ using \eqref{eqn:muaff} and \eqref{eqn:mu} and update $\mu=\sigma \mu$.\\
end
\end{algorithm}

\begin{theorem}\label{thm:converge}
Suppose that all the sequences $\{x_k\}$, $\{y_k\}$, $\{r_k\}$, $\{s_k\}$ generated by \cref{alg:int} are bounded. Then \cref{alg:int} stops in finite iterations.
\end{theorem}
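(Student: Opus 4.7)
The plan is to separate the argument along the two-loop structure of \cref{alg:int}: first show that, for every fixed $\mu>0$, the inner iteration terminates in finitely many steps; then bound the number of outer iterations needed before $E(\cdot;0)\leq\epsilon_{TOL}$ by exploiting the geometric decrease of $\mu$. Finite termination of the whole algorithm then follows immediately.

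For the inner loop I would begin from the observation that the coefficient matrix $B$ in \eqref{eqn:p} is symmetric positive definite: $Q_1$ and $Q_2$ are positive semidefinite by construction, $Diag(x)^{-1}Diag(r)$ and $Diag(y)^{-1}Diag(s)$ are nonnegative diagonal matrices on the strict interior, and $\rho I\succ 0$. Consequently the primal direction $p=(\Delta x^T,\Delta y^T)^T$ satisfies $p^T\nabla\phi(x,y)=-p^T Bp<0$, so $p$ is a strict descent direction for the log-barrier merit function $\phi$, and the Armijo backtracking rule \eqref{eqn:armijo} succeeds in finitely many halvings. The monotone decrease of $\phi$, combined with the blow-up of $-\mu\sum\log x_i-\mu\sum\log y_j$ near the boundary and with the hypothesis that $\{x_k,y_k,r_k,s_k\}$ is bounded, confines the iterates to a compact subset of the open positive orthant; on this set $\nabla^2\phi$ is bounded, $B$ has eigenvalues bounded uniformly away from $0$ and $\infty$, and $\nabla\phi$ is Lipschitz. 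A standard backtracking line-search convergence theorem \cite{nocedal2006numerical} then yields $\nabla\phi(x_k,y_k)\to 0$, and uniform positive definiteness of $B$ forces $\|p_k\|\to 0$. Boundedness of $(r_k,s_k)$ together with the fraction-to-boundary rule prevents $\alpha_2$ and $(\Delta r_k,\Delta s_k)$ from conspiring to let $(r_k,s_k)$ escape, so passing to the limit in the last two block rows of \eqref{eqn:equation} gives $r_k\cdot x_k-\mu e\to 0$ and $s_k\cdot y_k-\mu e\to 0$, while the first two block rows give $r_k-graX_k\to 0$ and $s_k-graY_k\to 0$. By continuity of $E(\cdot;\mu)$, the inner stopping criterion is reached after finitely many steps.

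For the outer loop, the update rule \eqref{eqn:mu} contracts $\mu$ by at most a factor $0.99$ at every cycle, so $\mu_j\leq 0.99^j\mu_0\to 0$. With the practical choice $\epsilon_\mu=\mu$, at the exit of each inner phase the exact error is controlled by
\[
E(x,y,r,s;0)\leq E(x,y,r,s;\mu)+\mu\sqrt{(n+m)k}\leq\mu\bigl(1+\sqrt{(n+m)k}\bigr),
\]
using $\|r\cdot x\|\leq\|r\cdot x-\mu e\|+\mu\sqrt{nk}$ and its counterpart for $s\cdot y$. This bound falls below $\epsilon_{TOL}$ after $O(\log(1/\epsilon_{TOL}))$ outer cycles, which completes the plan. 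The main technical obstacle I anticipate is inside the inner-loop argument: carefully showing that Armijo descent plus boundedness plus the log-barrier blow-up traps $(x_k,y_k)$ in a compact subset of the open positive orthant on which the barrier problem enjoys the uniform Hessian and gradient-Lipschitz bounds needed for standard line-search theory, and then arguing that the dual updates $(r_k,s_k)$ track the relations $r=graX$, $s=graY$, $r\cdot x=\mu e$, $s\cdot y=\mu e$ in the limit despite the separate step lengths $\alpha_1$ and $\alpha_2$.
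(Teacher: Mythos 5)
Your inner-loop plan follows essentially the paper's own route: monotone Armijo descent of the barrier merit function keeps $(x_k,y_k)$ in a compact subset of the open orthant, the regularized Gauss--Newton matrix in \eqref{eqn:p} is uniformly positive definite there, hence $(\Delta x^T,\Delta y^T)\nabla\phi(x_k,y_k)\to 0$, $\nabla\phi(x_k,y_k)\to 0$ and $(\Delta x,\Delta y)\to 0$; the paper re-derives the standard backtracking argument by a subsequence contradiction instead of citing it, and it records explicitly that $\inf\alpha_1^{\max}>0$, a point you should state since your backtracking starts from $\alpha_1^{\max}$ of \eqref{eqn:alpha} rather than from a fixed unit trial step. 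Your outer-loop argument, however, is genuinely different and arguably cleaner: the bound $E(\cdot;0)\le E(\cdot;\mu)+\mu\sqrt{(n+m)k}$ combined with $\sigma\le 0.99$ in \eqref{eqn:mu} and $\epsilon_\mu=\mu$ gives termination in $O(\log(1/\epsilon_{TOL}))$ outer cycles, constructively and without compactness; the paper instead extracts a cluster point of the inner-exit sequence, passes to the limit in $E(\cdot;\mu)\le\epsilon_\mu$ to get $E(\bar x,\bar y,\bar r,\bar s;0)=0$, and concludes by contradiction. What your version buys is an explicit iteration bound for the outer loop; what it costs is nothing, provided the inner analysis is completed.

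The one genuine gap --- which you flag as an ``obstacle'' but do not close --- is the dual-tracking step. From $(\Delta x,\Delta y)\to 0$, the last two block rows of \eqref{eqn:equation} only yield $\Delta r_k-(\mu x_k^{-1}-r_k)\to 0$ and $\Delta s_k-(\mu y_k^{-1}-s_k)\to 0$; they do not by themselves give $r_k\cdot x_k-\mu e\to 0$, and similarly the first block row gives $r_k-graX\to 0$ only if one also knows $\Delta r_k\to 0$, which is not automatic. The missing ingredient is that the dual steps are eventually long enough for $r_k$ to actually converge to $\mu x_k^{-1}$: since $\Delta r_k$ approaches $\mu x_k^{-1}-r_k$, whose negative components exceed $-r_k$ by a margin bounded away from zero (as $x_k$ is bounded above and $\mu>0$), the fraction-to-boundary rule \eqref{eqn:alpha} gives $\alpha_2^{\max}\ge\tau$ for all large $k$, and then $r_{k+1}-\mu x_k^{-1}=(1-\alpha_2)(r_k-\mu x_k^{-1})+\alpha_2\bigl(\Delta r_k-(\mu x_k^{-1}-r_k)\bigr)$ contracts geometrically, with the drift in $x_k$ negligible because $\Delta x_k\to 0$; only after this contraction estimate does $\|r_k\cdot x_k-\mu e\|$ (and hence $E(x_k,y_k,r_k,s_k;\mu)$, via \eqref{error}) fall below $\epsilon_\mu$. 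This is precisely the role of the paper's argument that $\alpha_2^k$ reaches $1$ within a bounded number $T$ of iterations near a cluster point; your plan needs this step (or an equivalent estimate) written out before the inner-loop finite-termination claim is complete.
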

\begin{proof}
We will consider the inner loop first and show that for a given $\mu>0$, $E(x_k,y_k,r_k,s_k;\mu)\leq \epsilon_{\mu}$ will be satisfied in finite iterations.

Based on the Armijo line search rule \eqref{eqn:armijo}, the value of $\phi(x_k,y_k)$ decreases monotonously. Then we have that the lower bound of $x_k$ and $y_k$ is greater than a strictly positive constant that depends on $\mu$. Thus the smallest eigenvalue of the coefficient matrix of \eqref{eqn:p} is greater than a constant greater than 0. Further more, using the boundedness of $(x_k,y_k)$, we obtain that $(\Delta x,\Delta y)$ is bounded. According to the lower bound of $x_k,y_k$, the boundedness of $(\Delta x,\Delta y)$ and \eqref{eqn:alpha}, we have
\begin{equation}\label{prof:amax}\inf \alpha_1^{\max}> 0.
\end{equation}

According to the stepsize rule \eqref{eqn:armijo}, we have
$$
\alpha_1 (\Delta x^T,\Delta y^T)\nabla\phi(x_k,y_k)\rightarrow 0
$$
We aim to prove
\begin{equation}\label{prof:arrow}
(\Delta x^T,\Delta y^T)\nabla\phi(x_k,y_k)\rightarrow 0
\end{equation}
next. To prove it by contradiction, we suppose that
\begin{equation}\label{prof:narrow}
(\Delta x^T,\Delta y^T)\nabla\phi(x_k,y_k) \nrightarrow 0
\end{equation}
This means that there exists a subsequence $\mathcal T$ and a constant $a>0$ such that
\begin{equation}\label{prof:a}
-(\Delta x^T,\Delta y^T)\nabla\phi(x_k,y_k)>a
\end{equation}
for $k\in \mathcal T$. Due to the boundedness of $\{(x_k,y_k)\}_{k\in\mathcal T}$, there exists a subsequence
$\mathcal T_1 \in \mathcal T$ such that $\{(x_k,y_k)\}_{k\in \mathcal T_1}$ converges to $(\bar x,\bar y)$. By \eqref{prof:narrow} and \eqref{prof:a}, we have
$$
\{\alpha_1^k\}_{k\in\mathcal T_1}\rightarrow 0
$$
According to \eqref{prof:amax},
$$\alpha_1^k<\inf \alpha_1^{\max}$$
when $k\in\mathcal T_1$ is large enough. For simplicity, we redefine the sequence satisfying the above condition as $\mathcal T_1$.
From the stepsize rule, the condition \eqref{eqn:armijo} is violated by $\alpha_1 = 2\alpha_1^k$. We have
\begin{equation}\label{prof:frac}\begin{array}{c}
(\phi (x_k + 2\alpha_1^k \Delta x ,y_k + 2\alpha_1^k \Delta y)- \phi (x_k ,y_k ))/(2\alpha_1^k)\\
 > \eta (\Delta x^T,\Delta y^T)\nabla\phi(x_k,y_k)
\end{array}\end{equation}
Taking the limit of the above inequality, we obtain
$$
(\Delta \bar x^T,\Delta \bar y^T)\nabla\phi(\bar x_k,\bar y_k)\geq \eta (\Delta \bar x^T,\Delta \bar y^T)\nabla\phi(\bar x_k,\bar y_k)
$$
Due to $0<\eta<1$, it follows that $(\Delta \bar x^T,\Delta \bar y^T)\nabla\phi(\bar x_k,\bar y_k)\geq 0$. On the other hand, $(\Delta  x^T,\Delta y^T)\nabla\phi( x_k, y_k)<0$. Therefore, $(\Delta \bar x^T,\Delta \bar y^T)\nabla\phi(\bar x_k,\bar y_k)= 0$, which is in contradiction to \eqref{prof:narrow}. So \eqref{prof:arrow} is established.

From \eqref{eqn:p} and \eqref{prof:arrow}, it follows that
$$
\nabla\phi(x_k,y_k)\rightarrow 0
$$
and
$$
(\Delta x,\Delta y)\rightarrow 0.
$$
Then according to \eqref{eqn:equation}, we obtain
$$
\begin{array}{c}
\Delta r\rightarrow \mu x_k^{-1} -r_k\\
\Delta s\rightarrow \mu y_k^{-1} -s_k
\end{array}
$$
For an arbitrary cluster point $(\bar x,\bar y)$ , for any $\delta_\mu >0$, there exists a constant $\kappa$ such that
$$\|(x^{\kappa},y^{\kappa})-(\bar x,\bar y)\|\leq \delta_\mu$$
and
$$
\begin{array}{c}
\|\Delta r+ r_k-\mu x_k^{-1}\|\leq \delta_\mu \\
\|\Delta s+ s_k-\mu y_k^{-1}\|\leq \delta_\mu \\
\|(\Delta x,\Delta y)\|\leq \delta_\mu\\
\|\nabla\phi(x_k,y_k)\|\leq \delta_\mu
\end{array}
$$
for all $k\geq \kappa$.
Due to the boundedness of $(x_k,y_k)$, $\alpha_2^k$ can reach 1 for some $k:=\bar k\leq \kappa+T-1$, where $T$ is a constant. Then it follows from
$$
\|(\bar x,\bar y)-(x_{\bar k},y_{\bar k})\|\leq T\delta_\mu
$$
that
$$
\|r_{\bar k}-\mu x_{\bar k}^{-1}\|\leq c \delta_\mu
$$
$$
\|s_{\bar k}-\mu y_{\bar k}^{-1}\|\leq c \delta_\mu
$$
where $c$ is constant. Therefore, for a given $\epsilon_{\mu}>0$, let $\delta_\mu$ be sufficiently small, then there exists a $k$ such that $E(x_k,y_k,r_k,s_k;\mu)\leq \epsilon_{\mu}$.

We denote the sequence satisfying the inner loop stopping criterion by
$$\{(x_k,y_k,r_k,s_k\}_{k\in \mathcal S}.$$
To prove the theorem by contradiction, we suppose that there is no point satisfying the outer loop stopping criterion $E(x_k,y_k,r_k,s_k;0)\leq \epsilon_{TOL}$. Due to the boundedness, there exists a cluster point of $\{(x_k,y_k,r_k,s_k\}_{k\in \mathcal S}$. For any cluster point $(\bar x,\bar y,\bar r,\bar s)$, we consider the limits on both sides of
 $$E(x_k,y_k,r_k,s_k;\mu)_{k\in \mathcal T_2}\leq \epsilon_\mu,$$
 then we have that
 $$
 E(\bar x,\bar y,\bar r,\bar s;0)=0.
 $$
Thus \cref{alg:int} stops at some ${k\in \mathcal T_2}$, a contradiction. Therefore, \cref{alg:int} stops in finite iterations.
\end{proof}

\subsection{Computation}

In general, the computational cost in each iteration of the interior point method is usually the cubic power of its size, which is impractical for large scale problems. However, for the special NMF problem under consideration here, the amount of computation can be greatly reduced, so that the proposed method can be applied to practical problems involving streaming data. In the following, we analyze the computational cost of \cref{alg:int}.

First of all, to compute the gradient in \eqref{prob}, $O(nmk)$ flops are needed.

The main computational cost of \eqref{alg:int} is computing the primal dual direction \eqref{eqn:equation}. One can solve \eqref{eqn:p} to obtain $(\Delta x,\Delta y)$ first, and then compute $(\Delta r,\Delta s)$ within a low cost.

We rewrite \eqref{eqn:p} as
\begin{equation}\label{eqn:step}
\left(
  \begin{array}{cc}
    \bar Q_1 & \bar C^T \\
    \bar C & \bar Q_2 \\
  \end{array}
\right)
\left(
  \begin{array}{c}
    \Delta x \\
    \Delta y \\
  \end{array}
\right)=\left(
          \begin{array}{c}
            b_1 \\
            b_2 \\
          \end{array}
        \right)
\end{equation}
In order to minimize the computational cost, we first decompose $\bar Q_1$.
$$
\bar Q_1=P^TP
$$
can be obtained by Cholesky factorization or eigenvalue decomposition. Since the matrix $\bar Q_1$ is composed of $n$ positive definite diagonal blocks with the size of k times k, one can obtain $P$ and $P^{-1}$ within $O(nk^3)$ flops.
\eqref{eqn:step} is equivalent to
$$
\left(
  \begin{array}{cc}
    I & P^{-T}\bar C^T \\
    \bar C P^{-1} & \bar Q_2 \\
  \end{array}
\right)
\left(
  \begin{array}{c}
    P\Delta x \\
    \Delta y \\
  \end{array}
\right)=\left(
          \begin{array}{c}
            P^{-T}b_1 \\
            b_2 \\
          \end{array}
        \right).
$$
Then we solve $\Delta y$ from
\begin{equation}\label{eqn:step2}
(\bar Q_2-(\bar C P^{-1})(\bar C P^{-1})^T)\Delta y=b_2-(\bar C P^{-1})(P^{-T}b_1)
\end{equation}
and compute $\Delta x$ by
$$
\Delta x=P^{-1}(P^{-T}b_1-(\bar C P^{-1})^T\Delta y).
$$

We need $O(nmk^2)$ flops for constructing $\bar C$ and $O(nmk^3)$ flops for computing $\bar CP^{-1}$. By considering that each block of $\bar C$ is rank one, we can compute $\bar CP^{-1}$ within $O(nmk^2)$ flops. The dominant computation is the computation of $(\bar C P^{-1})(\bar C P^{-1})^T$ which costs $O(nm^2k^3)$ flops. If we consider that each block of $\bar C$ is rank one, it can be reduced to $O(nm^2k^2)$ flops.
When solving \eqref{eqn:step2} by Cholesky factorization, since the size of the coefficient matrix is $mk$ by $mk$, the computational cost is $O(m^3k^3)$.
%\tcp{any other possible saving in this factorization step?} \textcolor{blue}{I have not come up with any saving. BTW, $O(m^3k^3)$ is dominated by $O(nm^2k^3)$, the computation of $(\bar C P^{-1})(\bar C P^{-1})^T$, thus it may not be important to optimize the factorization step.}
Other computations, like computing the right side of \eqref{eqn:step2} and computing $\Delta x$, are $O(nmk^2)$ flops.

To sum up, the computation cost of \cref{alg:int} in each iteration is $O((n+mk)m^2k^2)$. Compared with the computation of gradient $O(nmk)$, the cost is no more than $O(mk^2)$ times. Since $n\gg m\gg k$ and $k$ is usually very small in our streaming data, the computational complexity is completely acceptable.

\section{A two-stage algorithm for NMF on streaming data}

In this section, we propose a practical algorithm with fast convergence for solving NMF in streaming data. It combines both ANLS framework with active set method and interior point method proposed in the previous sections.

In the early stage of the algorithm, we use ANLS framework with active set method. It can reduce the value of objective function rapidly. We use the relative step tolerance, which is a relative lower bound on the size of a step, meaning
$$\|(x_k,y_k)-(x_{k+1},y_{k+1})\|\leq \epsilon_{STOL} (1 + \|(x_k,y_k)\|),$$
as the stopping criterion of this stage. If the algorithm attempts to take a step that is smaller than step tolerance, the iterations end.

At the end of the first phase, the algorithm is going to enter the second phase of using the interior point method. However, the solutions from active set method usually contain elements with value zero, which is incompatible to the strict interior point required by interior point method. Meanwhile, we also need to provide the initial dual variables $(r,s)$ to the interior point method. To address these issues, we first change the primal variable smaller than $\rho_0$ to $\rho_0$ by
\begin{equation}\label{upxy}
(x,y):=\max\{(x,y),\rho_0\}
\end{equation}
where $\rho_0$ is a small positive constant. We can choose
\begin{equation}\label{rho0}
\rho_0=10^{-6}\max\{(x,y)\}.
\end{equation}
Next, we give the initial value of the dual variable by
\begin{equation}\label{rs}
\begin{array}{c}
r=\max\{|graX|\}e\\
s=\max\{|graY|\}e
\end{array}
\end{equation}
We set the parameters
\begin{equation}\label{mu}
\mu=\frac{x^Tr+y^Ts}{mk+nk}
\end{equation}
and
\begin{equation}\label{rho}
\rho=\epsilon_{TOL}.
\end{equation}

After these preparations, the algorithm enters its second stage by implementing \cref{alg:int}. As the Hessian matrix is approximated in \cref{alg:int} by the positive definite matrix
\begin{equation}\label{hessian2}
\left(
  \begin{array}{cc}
    Q_1+\rho I & \bar C \\
    \bar C & Q_2+\rho I \\
  \end{array}
\right),
\end{equation}
in order to further speed up convergence, we can change $\bar C$ back to $C$ at the right time. A heuristic way to switch to
\begin{equation}\label{hessian}
\left(
  \begin{array}{cc}
    Q_1+\rho I & C \\
     C & Q_2+\rho I \\
  \end{array}
\right),
\end{equation}
is by monitoring $\sigma$, which is given in \eqref{eqn:mu}. A small $\sigma$ implies that the predictor step generates a point close to the boundary, thus it is likely to be close to a local minimum. Therefore, when
$$
\sigma\leq \sigma_c,
$$
we switch to \eqref{hessian}, where $\sigma_c$ is a user-supplied constant, and we set $\sigma_c=0.01$ in our test. The computation of the primal-dual direction is similar to that using $\bar C$. The difference is that each block of $C$ is no longer rank one, thus the computational cost is $O(nm^2k^3)$ flops. Since \eqref{hessian} is not guaranteed to be positive definite, the primal-dual direction using \eqref{hessian} may not be a descent direction. We check the negativity of
$$
(\Delta x^T,\Delta y^T)\nabla\phi(x_k,y_k)
$$
in \eqref{eqn:armijo} before we implement the line search. If we fail to obtain a descent direction, we switch to the positive definite Hessian \eqref{hessian2}.

All elements of the above approaches are presented in our two-stage algorithm, as outlined below in \cref{alg:fast}.
\begin{algorithm}
\caption{A fast two-stage algorithm}\label{alg:fast}
Initialize: Choose initial $X,Y$\\
Implement \eqref{anls} with active set method\\
Update variables by \eqref{upxy}, \eqref{rho0} and \eqref{rs}\\
Set parameters by \eqref{mu} and \eqref{rho}, $\eta=0.5$, select the tolerance $\epsilon_{TOL}$, and let $flag=0$.\\
Repeat until $E(x_k,y_k,r_k,s_k;0) \leq \epsilon_{TOL}$\\
\text{\quad} $\epsilon_\mu:=\mu$\\
\text{\quad} Repeat until $E(x_k,y_k,r_k,s_k;\mu) \leq \epsilon_{\mu}$\\
\text{\quad\quad} If $flag=1$, \\
\text{\quad\quad\quad} compute the primal-dual direction by solving \eqref{eqn:equation}
\text{\quad\quad\quad} with Hessian \eqref{hessian}.\\
\text{\quad\quad\quad} If $(\Delta x^T,\Delta y^T)\nabla\phi(x_k,y_k)\geq 0$,\\
\text{\quad\quad\quad\quad} $flag=0$, compute the primal-dual direction by\\
\text{\quad\quad\quad\quad} solving \eqref{eqn:equation}.\\
\text{\quad\quad} Else, \\
\text{\quad\quad\quad} compute the primal-dual direction by solving \eqref{eqn:equation}.\\
\text{\quad\quad} Compute $\alpha_1^{\max}$ and $\alpha_2^{\max}$ using \eqref{eqn:alpha}.\\
\text{\quad\quad} Backtrack step lengths $\alpha_1=\frac{1}{2^t}\alpha_1^{\max}, \alpha_2=\alpha_2^{\max}$ to find\\
\text{\quad\quad} the smallest integer $t\geq 0$ satisfying \eqref{eqn:armijo}.\\
\text{\quad\quad} Compute $(x^{k+1},y^{k+1},r^{k+1},s^{k+1})$ using \eqref{eqn:update}.\\
\text{\quad\quad} Set $k:=k+1$.\\
\text{\quad} End\\
\text{\quad} Compute parameter $\sigma$ using \eqref{eqn:muaff} and \eqref{eqn:mu} and update $\mu=\sigma \mu$.\\
\text{\quad} If $\sigma\leq 0.01$, $flag=1$, else $flag=0$\\
End
\end{algorithm}

\section{Numerical tests}
\label{sec:5}

We test our two-stage algorithm and compare it with other ANLS-based methods including NeNMF \cite{guan2012nenmf}, QRPBB \cite{huang2015quadratic} and ANLS-BPP \cite{kim2011fast}. All the tests are performed using MATLAB 2020a on a computer with 64-bit system and 2.70GHz CPU. Comparisons are done on both synthetic data sets and real-world problems.

\subsection{Stopping criterion}

The KKT conditions of \eqref{prob} are given in \eqref{eqn:kkt}. The definition of the error function $E(x,y,r,s;0)$ \eqref{error} measures the violation of the KKT conditions. Therefore, we set
$$E(x,y,r,s;0)\leq \epsilon_{TOL}$$
to be the stopping criterion, where
$$\epsilon_{TOL}=10^{-6}.$$

The ANLS-based algorithms do not generate the dual variables $r$ and $s$. Here we give a reasonable definition that
$$
\begin{array}{c}
r=\max\{graX,0\}\\
s=\max\{graY,0\}\\
\end{array}
$$

In some cases, we also limit the maximum CPU time, in cases where some algorithms can not reach the given accuracy.

\subsection{Streaming AgMOR PDF data}

Zhao et al. \cite{zhao2011determining} measured the X-ray diffraction data during the nucleation and growth of zeolite-supported Ag nanoparticles through reduction of Ag-exchanged mordenite (MOR), and processed the data with pair distribution function (PDF) measurement. In the field of chemistry, more and more people use mathematical tools to analyze their measured data. Chapman et al. \cite{chapman2015applications} used principle component analysis (PCA) and some post-processing to analyze the data given in  \cite{zhao2011determining}, and obtained 3 principle components. Since PDF is a distribution-type function, NMF may be intuitively more applicable.

We simply remove the negativity of the raw data by shifting each PDF up by the opposite of its original minimum value. Then we perform the NMF algorithms on the data. The size of the data is
$$n=3000,\ m=36,$$
and in the algorithms, $k$ is set to be 3 based on the analysis of \cite{chapman2015applications}.

We use the same initial point for each algorithm. The relationship between KKT violation $E$ and CPU time of the four algorithms are shown in Fig.~\ref{fig:agmor}. It can be seen that the performances of NeNMF, ANLS-BPP and our 2-STAGE algorithm are similar in the early stage, and the decline of error function value for QRPBB is the most obvious. After that, there is a rise of error function value in 2-STAGE, because our algorithm begins to enter the second stage and the variables change, and some shocks occur later, which is caused by the instability of the early iterations of the interior point method. After several iterations, 2-STAGE becomes stable and converges quickly to meet the termination criterion. Compared with NeNMF and ANLS-BPP, QRPBB is always more accurate. Even so, it can not achieve the given criterion in a short time.

\begin{figure}
  \centering
  % Requires \usepackage{graphicx}
  \includegraphics[scale=0.5]{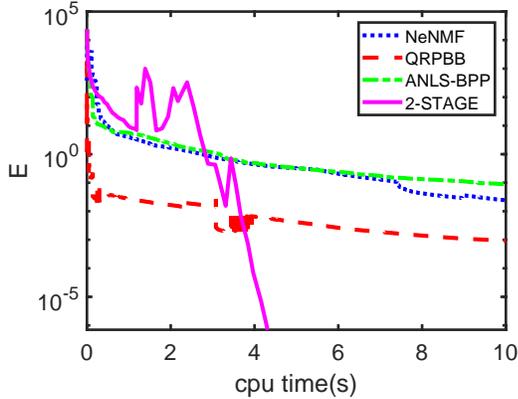}\\
  \caption{CPU time (s) versus tolerance values on AgMOR Data}\label{fig:agmor}
\end{figure}

Next, we generate 10 different random initial points. For each of them, all algorithms are implemented. The results are shown in Tab.~\ref{tab:agmor}. Because none of the three algorithms compared with the 2-STAGE algorithm can meet the stopping criterion in a short time, we set the maximum running CPU time as 20 seconds. Tab.~\ref{tab:agmor} presents the average, minimum and maximum CPU time, the KKT violation $E$ and the objective function value of each algorithm respectively. It can be seen that our algorithm always has the minimum KKT violation and the minimum objective function value. It has a great advantage in finding a high-precision solution within a short amount of time.

\begin{table*}[htbp]
		\centering
%\resizebox{\textwidth}{!}{
\setlength{\tabcolsep}{2mm}
{
\begin{tabular}{|c|c|c|c|}
  \hline
  % after \\: \hline or \cline{col1-col2} \cline{col3-col4} ...
  Algorithm & cpu(s):avrg(min,max) & E:avrg(min,max) & f:avrg(min,max) \\
  \hline
  NeNMF & 20.01(20.00,20.03) & 6.64(1.72,16.32)e-2& 5.37274(5.37242,5.37329)e+2 \\
  QRPBB &20.02(20.00,20.03) & 2.19(1.48,2.94)e-3& 5.37236(5.37213,5.37263)e+2 \\
  ANLS-BPP & 20.02(20.02,20.05) & 1.37(0.75,2.14)e-2 & 5.37236(5.37206,5.37264)e+2 \\
  2-STAGE & 5.03(3.55,8.88) & 3.58(1.24,6.30)e-7 & 5.37205(5.37205,5.37205)e+2 \\
  \hline
\end{tabular}}
%}
\caption{Experimental results on AgMOR data}\label{tab:agmor}
\end{table*}

\subsection{Yale face database}

The Yale face database is widely tested for face recognition \cite{cai2006orthogonal}. It contains 165 grayscale images of 15 individuals. There are 11 images per subject, one per different facial expression or configuration: lighting (center-light, left-light and right-light), with/without glasses, facial expressions (normal, happy, sad, sleepy, surprised, and wink). The original image size is $320\times 243$ pixels. To reduce the computational burden, the image size was reduced to $64\times 64$ pixels. Because the image of face also has some degree of continuity, we regard the data as streaming data to test our algorithm. The data size is $n=4096,m=165$.

We gradually increase $m$ to explore the influence of $m$ variation on the algorithm. We selected the face images of the first $i$ individuals, $i=1,2,4,8,15$, i.e. $m=11,22,44,88,165$. In the following tests, we fix $k=3$. We limit a maximum CPU time of 60 seconds for each algorithm and generate 10 different random initial points for each sample. We see from Fig.~\ref{fig:yaleface}, where we take an example of $m = 44$ and an example of $m = 165$, that the first stage of 2-STAGE is not as efficient as other algorithms. The main reason is that the continuity of face data is not strong enough compared with streaming data, thus the active set changes frequently in the algorithm resulting large computational cost. In Fig.~\ref{fig:yaleface} (top), the second stage of 2-STAGE, interior point method, shows a fast local convergence, since the slope is significantly steeper than the others. However, in Fig.~\ref{fig:yaleface} (bottom), the slope of the second stage of 2-STAGE is similar with others. The reason is that when $m$ increases, the computational cost of interior point method increases faster than that of other algorithms. Therefore, we see from Tab.~\ref{tab:yaleface} that, as anticipated, the larger $m$ is, the less efficient 2-STAGE is compared with other algorithms.

\begin{figure}\centering
%\subfigure{
\includegraphics[scale=0.5]{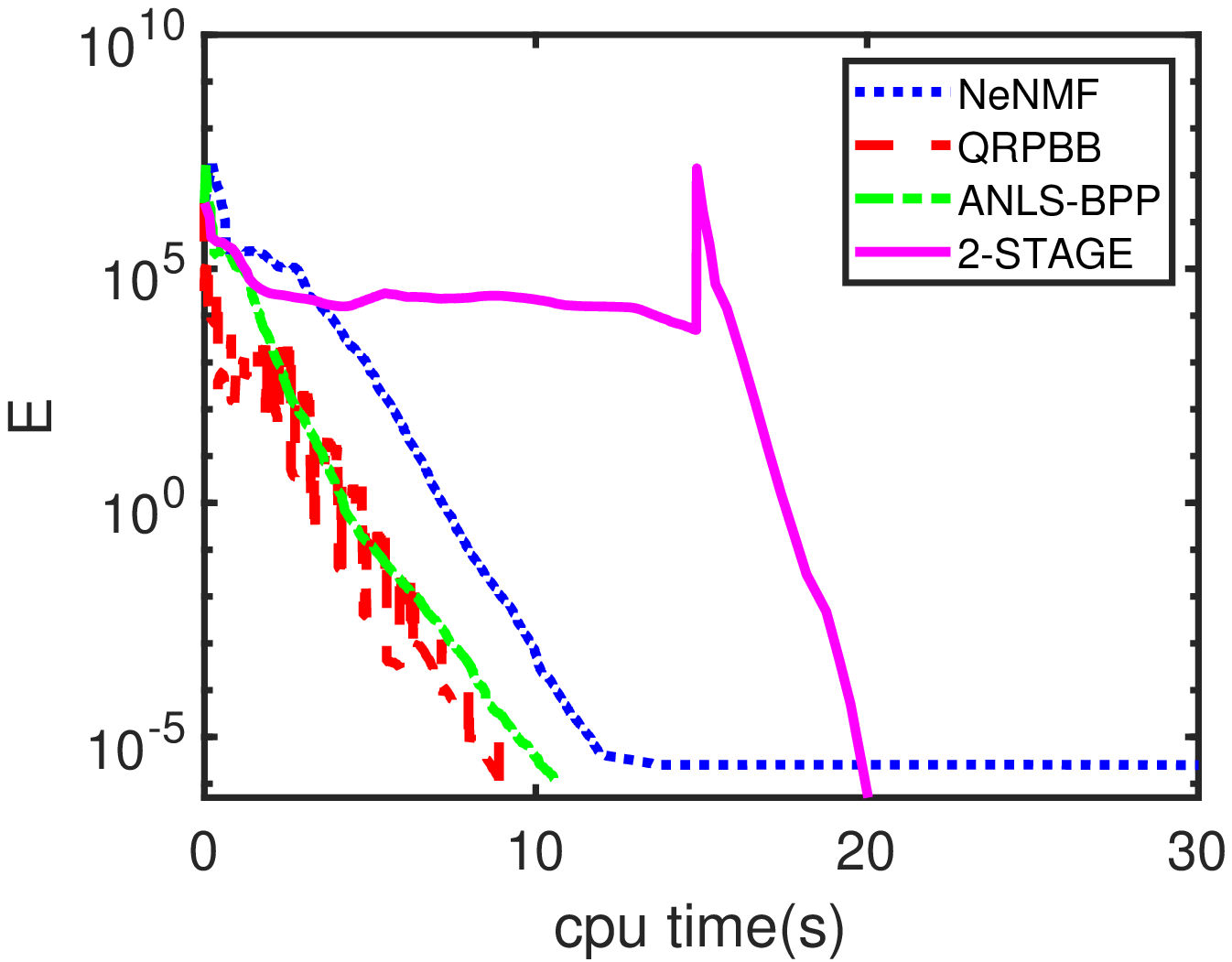}\\
%}
%\subfigure{
\includegraphics[scale=0.5]{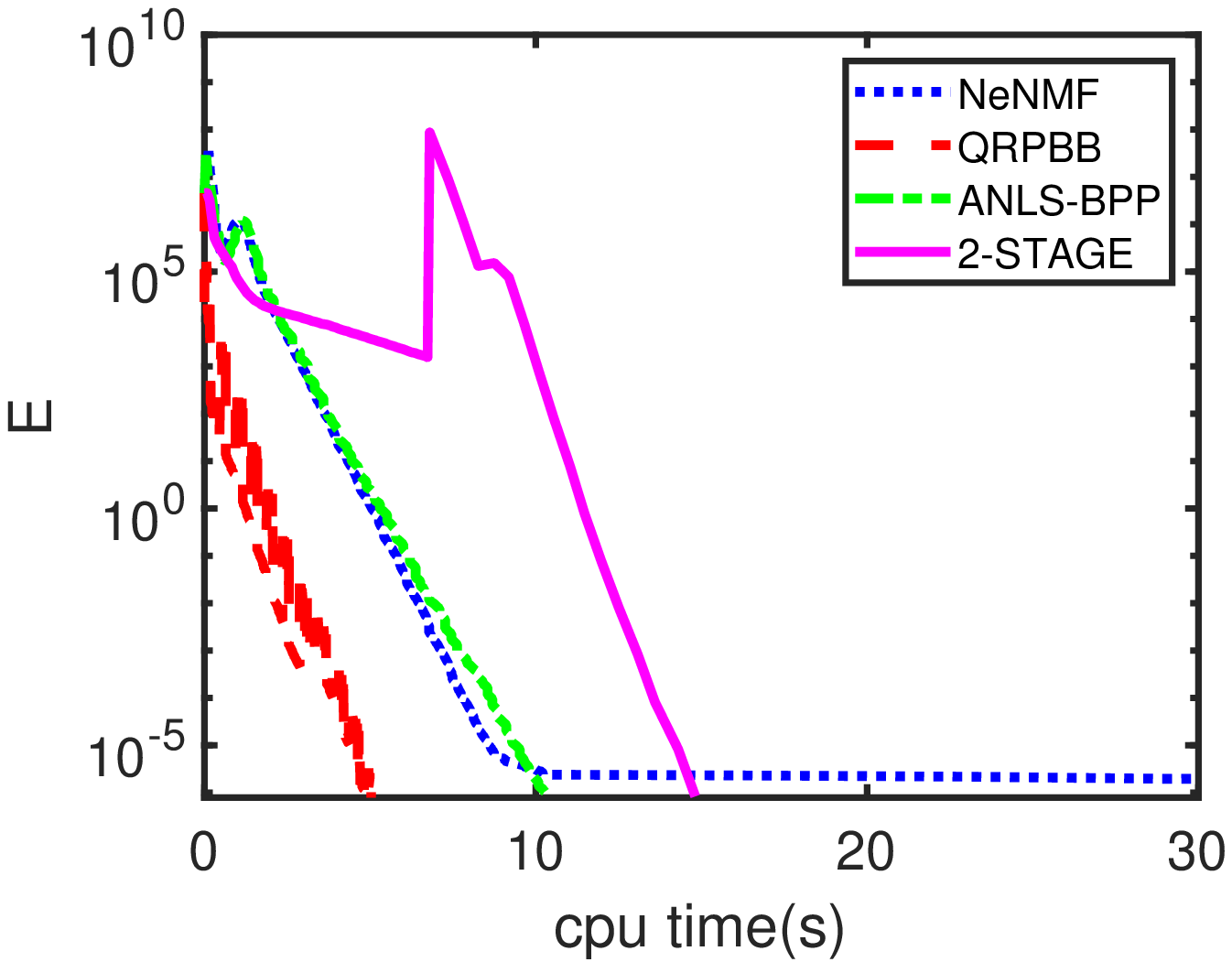}
%}
\caption{Experimental tests on Yale face data, $m=44$ (top) and $m=165$ (bottom).}
\label{fig:yaleface}
\end{figure}

\begin{table*}[htbp]
		\centering
%\resizebox{\textwidth}{!}{
\setlength{\tabcolsep}{2mm}
{
\begin{tabular}{|c|c|c|c|c|}
  \hline
  % after \\: \hline or \cline{col1-col2} \cline{col3-col4} ...
  size(n,m,k)&Algorithm & cpu(s):avrg(min,max) & E:avrg(min,max) & f:avrg(min,max) \\
  \hline
  (4096,11,3)&NeNMF & 44.94(30.50,66.53) & 125(9.77e-7,1254)& 1.40186(1.40104,1.40918)e+7 \\
  &QRPBB &19.10(12.05,40.28) & 8.20(3.39,9.99)e-7& 1.40511(1.40104,1.40918)e+7 \\
  &ANLS-BPP & 23.14(16.42,49.08) & 9.80(9.50,9.98)e-7 & 1.40185(1.40104,1.40918)e+7 \\
  &2-STAGE & 13.30(6.83,24.33) & 4.39(1.16,9.46)e-7 & 1.40520(1.40104,1.41008)e+7 \\
  \hline
  (4096,22,3)&NeNMF & 36.36(7.28,65.63) & 1.13(9.47e-7,11.34)& 3.13573(3.12627,3.14786)e+7 \\
  &QRPBB &6.30(2.41,17.23) & 7.32(2.63,9.95)e-7& 3.13324(3.12627,3.14785)e+7 \\
  &ANLS-BPP & 19.90(2.89,60.03) & 0.79(9.48e-7,5.78) & 3.13772(3.12627,3.14786)e+7 \\
  &2-STAGE & 9.81(5.45,18.28) & 3.54(1.79,7.37)e-7 & 3.12893(3.12627,3.14785)e+7 \\
  \hline
  (4096,44,3)&NeNMF & 63.76(61.03,67.06) & 2.71(2.04,6.17)e-6& 8.20694(8.20694,8.20694)e+7 \\
  &QRPBB &6.32(4.97,7.06) & 8.52(2.49,9.99)e-7& 8.20694(8.20694,8.20694)e+7 \\
  &ANLS-BPP & 10.06(8.34,12.14) & 9.85(9.44,9.99)e-7 & 8.20694(8.20694,8.20694)e+7 \\
  &2-STAGE & 16.05(11.08,31.14) & 5.05(1.02,9.02)e-7 & 8.20694(8.20694,8.20694)e+7 \\
  \hline
  (4096,88,3)&NeNMF & 62.86(61.05,64.81) & 2.09(1.35,4.19)e-6& 1.97873(1.97753,1.98962)e+8 \\
  &QRPBB &4.97(3.14,10.58) & 8.37(5.42,9.96)e-7& 1.97945(1.97753,1.98962)e+7 \\
  &ANLS-BPP & 9.10(7.25,16.50) & 4.53(1.32,9.81)e-7 & 1.97874(1.97753,1.98962)e+7 \\
  &2-STAGE & 14.76(7.86,43.48) & 5.05(1.02,9.02)e-7 & 1.97753(1.97753,1.97753)e+7 \\
  \hline
  (4096,165,3)&NeNMF & 63.99(60.17,67.20) & 1.54(1.35,1.89)e-6& 4.06305(4.06305,4.06305)e+8 \\
  &QRPBB &5.91(5.30,6.78) & 5.01(4.79,9.58)e-7& 4.06305(4.06305,4.06305)e+8 \\
  &ANLS-BPP & 11.46(10.81,12.36) & 9.60(9.40,9.85)e-7 & 4.06305(4.06305,4.06305)e+8 \\
  &2-STAGE & 29.78(16.58,61.78) & 4.10(1.19,8.39)e-7 & 4.06305(4.06305,4.06305)e+8 \\
  \hline
\end{tabular}}
%}
\caption{Experimental results on Yale face database}\label{tab:yaleface}
\end{table*}

\subsection{Synthetic data}

In order to test more problems on different scales, we artificially synthesized some data for testing.

We note that the focus of this paper is on streaming data. Chapman et al.\cite{chapman2015applications} use PCA method to study AgMor data. They found that there were three dominant components (singular values) of the data, which means that the original data is a rank-3 matrix plus noise. According to this characteristics, we construct the artificial data by the following methods. First, we decide the problem size $(n,m,k)$. Then, we generate a random matrix $X$ whose size is $n\times k$, and each element is uniformly distributed from 0 to 1. In the same way, we generate a random matrix $Y$ whose size is $k\times m$. Next, we compute $M=XY$ and add Gaussian noise, whose expectation is 0 and standard deviation is $0.1$, to each element of $M$. Finally, we change the negative elements in $M$ to zeros.

In this set of tests, each algorithm is limited to a maximum CPU time of 60 seconds. We generate 10 different random initial points for sample. The results are shown in Tab.~\ref{tab:synthetic}. From an average point of view, the performance of 2-STAGE is the best of the four algorithms, followed by QRPBB. Except for M = 50, k = 6, ANLS-BPP exceeds QRPBB. In some cases, other algorithms may be better than 2-STAGE. For example, when $m = 100$ and $k = 6$, the minimum time of QRPBB is much less than the maximum time of 2-STAGE. Generally speaking, the performance of 2Stage in getting high-precision solution truly stands out.

\begin{table*}[htbp]
		\centering
%\resizebox{\textwidth}{!}{
\setlength{\tabcolsep}{2mm}
{
\begin{tabular}{|c|c|c|c|c|}
  \hline
  % after \\: \hline or \cline{col1-col2} \cline{col3-col4} ...
  size(n,m,k)&Algorithm & cpu(s):avrg(min,max) & E:avrg(min,max) & f:avrg(min,max) \\
  \hline
  (2000,50,3)&NeNMF & 60.01(60.00,60.03) & 1.74(0.70,3.57)e-5& 4.61662(4.61662,4.61662)e+2 \\
  &QRPBB &29.36(20.17,48.14) & 8.46(5.60,9.80)e-7& 4.61662(4.61662,4.61662)e+2 \\
  &ANLS-BPP & 58.48(47.89,60.05) & 1.69(1.00,3.29)e-6 & 4.61662(4.61662,4.61662)e+2 \\
  &2-STAGE & 3.31(2.45,4.75) & 3.07(1.16,5.86)e-7 & 4.61662(4.61662,4.61662)e+2 \\
  \hline
  (2000,50,4)&NeNMF & 47.80(40.88,60.02) & 1.19(1.00,2.31)e-6& 4.56019(4.56019,4.56019)e+2 \\
  &QRPBB &22.89(12.78,34.22) & 7.30(2.67,9.99)e-7& 4.56019(4.56019,4.56019)e+2 \\
  &ANLS-BPP & 28.03(23.44,32.61) & 9.99(9.98,10.00)e-7 & 4.56019(4.56019,4.56019)e+2 \\
  &2-STAGE & 7.41(4.22,11.78) & 3.25(1.09,8.91)e-7 & 4.56019(4.56019,4.56019)e+2 \\
  \hline
  (2000,50,5)&NeNMF & 49.81(40.92,56.80) & 9.99(9.98,10.00)e-7& 4.45449(4.45449,4.45449)e+2 \\
  &QRPBB &25.14(20.61,35.91) & 5.99(1.80,8.86)e-7& 4.45449(4.45449,4.45449)e+2 \\
  &ANLS-BPP & 24.78(21.67,28.31) & 9.99(9.97,9.99)e-7 & 4.45449(4.45449,4.45449)e+2 \\
  &2-STAGE & 6.98(6.20,7.91) & 3.93(1.69,8.31)e-7 & 4.45449(4.45449,4.45449)e+2 \\
  \hline
  (2000,50,6)&NeNMF & 60.02(60.00,60.03) & 5.18(0.38,13.11)e-3& 4.36121(4.36121,4.36121)e+2 \\
  &QRPBB &60.01(60.00,60.03) & 2.19(0.02,8.31)e-4& 4.36121(4.36121,4.36121)e+2 \\
  &ANLS-BPP & 56.07(20.38,60.05) & 5.35(0.10,32.81)e-5 & 4.36309(4.36121,4.38005)e+2 \\
  &2-STAGE & 14.46(6.31,25.81) & 4.74(1.18,7.77)e-7 & 4.36121(4.36121,4.36121)e+2 \\
  \hline
  (2000,100,3)&NeNMF & 17.14(14.88,19.91) & 9.96(9.93,10.00)e-7& 9.57337(9.57337,9.57337)e+2 \\
  &QRPBB &5.84(4.86,7.70) & 7.81(1.76,9.98)e-7& 9.57337(9.57337,9.57337)e+2 \\
  &ANLS-BPP & 15.10(14.27,15.97) & 9.98(9.94,10.00)e-7 & 9.57337(9.57337,9.57337)e+2 \\
  &2-STAGE & 4.05(2.95,5.33) & 4.71(1.00,9.92)e-7 & 9.57337(9.57337,9.57337)e+2 \\
  \hline
  (2000,100,4)&NeNMF & 26.86(23.59,31.17) & 9.98(9.97,10.00)e-7& 9.54230(9.54230,9.54230)e+2 \\
  &QRPBB &14.53(13.17,15.63) & 5.75(2.37,8.99)e-7& 9.54230(9.54230,9.54230)e+2 \\
  &ANLS-BPP & 24.60(22.36,26.70) & 9.98(9.96,10.00)e-7 & 9.54230(9.54230,9.54230)e+2 \\
  &2-STAGE & 7.46(5.36,8.91) & 4.82(1.01,9.44)e-7 & 9.54230(9.54230,9.54230)e+2 \\
  \hline
  (2000,100,5)&NeNMF & 36.04(31.03,41.45) & 9.98(9.97,10.00)e-7& 9.44857(9.44857,9.44857)e+2 \\
  &QRPBB &17.60(16.17,20.45) & 7.02(4.22,9.12)e-7& 9.44857(9.44857,9.44857)e+2 \\
  &ANLS-BPP & 27.54(25.70,29.58) & 9.98(9.96,10.00)e-7 & 9.44857(9.44857,9.44857)e+2 \\
  &2-STAGE & 11.77(9.69,17.59) & 4.59(1.27,8.06)e-7 & 9.44857(9.44857,9.44857)e+2 \\
  \hline
  (2000,100,6)&NeNMF & 60.02(60.00,60.03) & 7.43(0.03,28.52)e-3& 9.35451(9.35451,9.35452)e+2 \\
  &QRPBB &57.20(37.47,60.03) & 2.54(0.07,12.20)e-5& 9.35451(9.35451,9.35451)e+2 \\
  &ANLS-BPP & 60.03(60.02,60.05) & 1.14(0.04,4.44)e-4 & 9.35451(9.35451,9.35451)e+2 \\
  &2-STAGE & 25.54(14.44,52.39) & 3.05(1.04,9.33)e-7 & 9.35451(9.35451,9.35451)e+2 \\
  \hline
\end{tabular}}
%}
\caption{Experimental results on synthetic data}\label{tab:synthetic}
\end{table*}

\section{Conclusions}
\label{sec:6}

In this paper we focused on solutions to the NMF for streaming data. We presented a fast two-stage algorithm, where the first stage is the ANLS framework with active set method which gains benefit from the continuity of streaming data, and the second stage is a line search interior point method which gets benefit from $n\gg m\gg k$. In addition, we have proved the global convergence of the proposed line search interior point method. The first stage reduces the value of the objective function rapidly, and the second stage converges to a local solution quickly due to the property of Newton-type direction. We tested the proposed algorithm on several real and synthetic data, and observed that compared with other algorithms, our algorithm is more effective in solving high-precision local solutions.

The active set method in the first stage does not reach the expected speed, even if it is tested on continuous data. We think that this may be caused by the limitations of the underlying code implementation in MATLAB. On the other hand, we find that the transition part between the two stages may induce instability. This is because the solution of the active set method cannot be directly used as the initial guess of the interior point method, and its changes have an impact on stability.
At present, the parameters used to generate starting point are selected carefully to avoid the instability. In the future, we will work to find a more stable transition technique.

Considering that in addition to the basic NMF model, there are other variants of NMF, such as constrained NMFs and structured NMFs, our algorithm has the potential to be applications to more problems through extension. This will be our consideration in the future.

\end{document}